 \newtheorem{theorem}{Theorem}[section]
 \newtheorem{definition}[theorem]{Definition}
 \newtheorem{lemma}[theorem]{Lemma}
\title{ On blowup of nonendpoint borderline Lorentz norms for the Navier-Stokes equations% Regularity issues for the Navier-Stokes equations in nonendpoint borderline Lorentz spaces
}
\author{T Barker, G Seregin
  \thanks{Email addresses: \texttt{tobias.barker@seh.ox.ac.uk, seregin@maths.ox.ac.uk}; }}
\affil{OxPDE, Mathematical Institute, University of Oxford, Oxford,UK}
\date{ \today}
\begin{document}
\maketitle
\begin{abstract}
\end{abstract}
\setcounter{equation}{0}
Assuming $T$ is a potential blow up time for the Navier-Stokes system in $\mathbb{R}^3$ or $\mathbb{R}^3_+$, we  show that the $L^{3,q}$  Lorentz norm, with $q$ finite, of the velocity field goes to infinity as time $t$ approaches $T$.\\
%Additionally we prove local regularity up to flat part of boundary, for certain classes of distributional solutions  that are $L_{\infty}(L^{3,q})$ with $q$ finite. 
\setcounter{equation}{0}
\section{Introduction}
In our paper we consider the Cauchy problem for the Navier-Stokes system in the space-time domain $Q_+=\Omega\times ]0,\infty[$ for vector-valued function $v=(v_1,v_2,v_3)=(v_i)$ and scalar function $q$, satisfying the equations
\begin{equation}\label{directsystem}
\partial_tv+v\cdot\nabla v-\Delta v=-\nabla q,\qquad\mbox{div}\,v=0
\end{equation}
in $Q_+$,
the boundary conditions
\begin{equation}\label{directbc}
v=0\end{equation}
on $\partial\Omega\times [0,\infty[$,
and the initial conditions
\begin{equation}\label{directic}
v(\cdot,0)=v_0(\cdot)\in C^\infty_{0,0}(\Omega):=\{v\in C_{0}^{\infty}(\Omega): \rm{div}\, v=0\}\end{equation}
in $\Omega$. It is assumed that %the initial velocity field $v_0$ is in $C^\infty_{0,0}(\Omega)$.
%Where $$C^\infty_{0,0}(\Omega):=\{v\in C_{0}^{\infty}(\Omega): \rm{div}\, v=0\}.$$
 %and that 
 $\Omega$ either $\mathbb{R}^3$ or $\mathbb{R}^3_{+}$.
 For $\Omega=\mathbb{R}^3$, in the classical paper \cite{Le}, Leray showed the existence of a solution $v$ to the problem (\ref{directsystem})-(\ref{directic}) satisfying the global energy inequality
 \begin{equation}\label{energyinequality}
\frac 12\int\limits_{\mathbb{R}^3} |v(x,t)|dx+\int\limits_0^t\int\limits_{\mathbb{R}^3}|\nabla v|^2dxdt'\leqslant \frac 12\int\limits_{\mathbb{R}^3}|v_0|^2 dx.
 \end{equation}
 Later on, in \cite{Hopf}, Hopf made important contributions when $\Omega$ is a bounded domain with sufficiently smooth boundary.
 To recall the modern defition of weak Leray-Hopf solutions, we first introduce certain necessary notation. 
 Let $Q_{T}:=\Omega\times ]0,T[$, $L_2(\Omega)$ and ${W}^{1}_{2}(\Omega)$ be usual Lebesgue and Lebesgue spaces, respectively. We define further 
 $\overset{\circ}{J}(\Omega)$ as the closure of $C_{0,0}^{\infty}(\Omega)$ in $L_2(\Omega)$ and 
 $\overset{\circ}{J}{^{1}_{2}}(\Omega)$ is the closure of the same set %$C_{0,0}^{\infty}(\Omega)$
  with respect to the Dirichlet metric.%^{\overset{\circ}{W}{^{1}_{2}}(\Omega)}.$$
 %Here, $L_2(\Omega)$ and ${W}^{1}_{2}(\Omega)$ are usual Lebesgue and Lebesgue spaces, respectively. 
 %We mention that for a subspace $V$ of a  Banach space $X$ $[V]^{X}$ is the closure of $V$ with respect to $X$.\\
 
%If $X$ is a Banach space with norm $\|\cdot\|_{X}$, then $L_{s}(a,b;X)$, $a<b$, means the usual Banach space of strongly measurable $X$-valued functions $f(t)$ on $]a,b[$ such that the norm
%$$\|f\|_{L_{s}(a,b;X)}:=\left(\int\limits_{a}^{b}\|f(t)\|_{X}^{s}dt\right)^{\frac{1}{s}}<+\infty$$
%for $s\in [1,\infty[$, and with the usual modification if $s=\infty$.
 \begin{definition}\label{weakLerayHopf}
 Let $\Omega$ be a domain in $\mathbb{R}^3$ %,$\mathbb{R}^3_{+}$ or a smooth bounded  3 dimensions domain. 
 %let $T\in ]0,\infty]$,
  and $v_0\in\overset{\circ}{J}(\Omega)$.  A weak-Leray Hopf solution to (\ref{directsystem})-(\ref{directic}) is a vector field $v:Q_\infty\rightarrow \mathbb{R}^3$ such that 
 \begin{equation}\label{vLerayenergyspace}
 v\in L_{\infty}(0,\infty;\overset{\circ}{J}(\Omega))\cap L_{2}(0,\infty;\overset{\circ}{J}{^{1}_{2}}(\Omega));
\end{equation}
 
 the function $t\rightarrow \int\limits_{\Omega}v(x,t)\cdot w(x)dx$ is continuous at any point $t\in[0,\infty[$ for any $w\in L_2(\Omega)$;

 for any divergence free test function $w\in C_{0}^{\infty}(Q_\infty)$
 \begin{equation}\label{distributionalsolution}
 \int\limits_{Q^+}(-v\cdot\partial_t w- v\otimes v:\nabla w+\nabla v:\nabla w) dx dt=0;
 \end{equation}
 
for any $t\in[0,\infty[$  
  \begin{equation}\label{energyleray}
 \frac 12\int\limits_{\Omega} |v(x,t)|dx+\int\limits_0^t\int\limits_{\Omega}|\nabla v|^2dxdt'\leqslant \frac 12\int\limits_{\Omega}|v_0|^2 dx;
 \end{equation}
 \begin{equation}\label{strongcontzero}
 \lim_{t\rightarrow 0^+}\|v(\cdot,t)-v_0(\cdot)\|_{L_{2}(\Omega)}=0.
 \end{equation}
 \end{definition}
 The uniqueness or non-uniqueness of Leray-Hopf solutions is a long-standing open problem. It is also known for a long time that smoothness of weak Leray-Hopf solutions implies their uniqueness in the class of weak solutions. For initial data satisfying (\ref{directic}), the velocity field $v$ is smooth and bounded
 for some short interval at least, see (incomplete) references  \cite{Heywood},\cite{kiselevL}, \cite{L1967} and  \cite{Le}. The loss of regularity might happen due to a blowup of the velocity. We define a blowup time $T$ as the first moment of time when
 \begin{equation}\label{definitionblowup}
\lim\limits_{t\uparrow T}\|v(\cdot,t)\|_{L_{\infty}(\Omega)}=\infty.\end{equation}

 %For $ v_0\in C^\infty_{0,0}(\Omega)$, it is well known that the solution is smooth and unique for sufficiently small times, see (incomplete) references  \cite{Heywood},\cite{kiselevL}, \cite{L1967} and  \cite{Le}. 
% We study weak-Leray Hopf solutions  of (\ref{directsystem})-(\ref{directic}) with initial data and space time domain as described in the first paragraph. We concern ourselves with the question of the existence of a finite time blow up namely whther or not there exists $T<\infty$ such that:
 %\begin{equation}%\label{definitionblowup}
%\lim\limits_{t\uparrow T}\|v(\cdot,t)\|_{L_{\infty}(\Omega)}=\infty.\end{equation}
%It very classical that, for $v_0\in C^{\infty}_{0,0}(\Omega)$, nonexistence of a finite blow-up time implies regularity and uniqueness, for the given data, in the class of Leray-Hopf solutions.\\ 
Our aim is to prove a certain necessary conditions for $T$ be a blowup time. Let us first mention several results in this direction.

In \cite{Le} Leray proves necessary conditions for $T$ to be a blowup time for $\Omega=\mathbb{R}^3$ namely:
\begin{equation}\label{Lerayrates}
\|v(\cdot,t)\|_{s,\Omega}\geq \frac {c_s}{(T-t)^{\frac {s-3}{2s}}}
\end{equation}
for any $0<t<T$, for all $s>3$, and  for a positive constant $c_s$ depending only on $s$.
Later in \cite{Giga1986} Giga proves  (\ref{Lerayrates}) for a wide class of domains $\Omega$ including a half space and bounded domains with sufficiently smooth boundaries.
 
 For the case $s=3$  no estimate of the form (\ref{Lerayrates}) is available. %Examination of the $L_{3}$ norm is much more difficult and remained open for a long time. In 2003 uniqueness of weak Leray-Hopf solutions in the class $$L_{\infty}(0,T; L_{3}(\mathbb{R}^3)).$$ was settled by Escauriza, Seregin and Sverak in \cite{ESS2003}. This implied that necessarily,
 However, in \cite{ESS2003}, it has  been proven:
 \begin{equation}\label{limsup}
\limsup\limits_{t\uparrow T}\|v(\cdot,t)\|_{L_{3}(\Omega)}=\infty.
\end{equation}
 In papers \cite{MSh2006} and \cite{S2005}, it has been shown that
(\ref{limsup}) remains to be true for
for  $\Omega=\mathbb R^3_+:=\{x=(x_i)\in \mathbb R^3:\,\,x_3>0\}$ and for $\Omega$ being a bounded domain with sufficiently smooth boundary.

Recent progress has been made in establishing the validity of (\ref{limsup}) for other critical spaces. We refer  to $X$, consisting of measurable functions acting on domains in 
$\mathbb{R}^3$, as critical  if for $u\in X$ such that $u_{\lambda}(x)=\lambda u(\lambda x)$  we have
$\|u_{\lambda}\|_{X}=\|u\|_{X}.$
In \cite{Phuc} and \cite{WangZhang1}, it was shown that when $X$ is the non endpoint Lorentz space with $q$ finite $L^{3,q}(\mathbb{R}^3)$ condition (\ref{limsup}) remains to be true. Recently, Gallagher et al proved (\ref{limsup}) holds for $X= \dot{B}^{-1+\frac{3}{ p}}_{p,q}(\mathbb{R}^3)$ in the framework of ''strong'' solutions. Later this was shown for weak Leray-Hopf solutions in \cite{WangZhang1}.

Our work is motivated by the following question:
what are the critical spaces $X(\Omega)$ and domains $\Omega$ for which 
\begin{equation}\label{criticalblowuplimit}
\lim\limits_{t\uparrow T}\|v(\cdot,t)\|_{X(\Omega)}=\infty
\end{equation}
holds true?

In \cite{Ser12} one of the authors proves this holds for $\Omega=\mathbb{R}^3$ and $X(\Omega)=L_{3}(\Omega)$ using the theory of local energy solutions in \cite {LR1}.
Later on both authors showed in \cite{BarkerSer} this remains to be true in the case $\Omega=\mathbb{R}^3_{+}$ and $X(\Omega)=L_{3}(\Omega)$. We now claim the following:
\begin{theorem}\label{limlorentz}
Let $\Omega=\mathbb{R}$ or $\mathbb{R}^{3}_{+}$. Suppose $3\leqslant q<\infty$.
Let $T$ be a blow-up time. Then necessarily
$$\lim\limits_{t\uparrow T}\|u(\cdot,t)\|_{L^{3,q}(\Omega)}=\infty.$$
\end{theorem}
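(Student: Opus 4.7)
The plan is to argue by contradiction, adapting the local energy framework of \cite{Ser12, BarkerSer} (developed there for $L_3$) to the Lorentz setting using decomposition techniques in the spirit of \cite{Phuc, WangZhang1}. Suppose the conclusion fails; then there exist $M < \infty$ and times $t_n \uparrow T$ with $\|v(\cdot, t_n)\|_{L^{3,q}(\Omega)} \leq M$. Since $L^{3,q}(\Omega)$ is reflexive and separable for $3 \leq q < \infty$, along a subsequence one has $v(\cdot, t_n) \rightharpoonup u_0$ weakly in $L^{3,q}(\Omega)$ with $\|u_0\|_{L^{3,q}} \leq M$. This $u_0$ will serve as the candidate initial datum at the critical time $T$.

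Next I would construct a global-in-time weak Leray-Hopf solution $\tilde v$ on $\Omega\times[T,\infty)$ with initial datum $u_0$, in the local energy class of Lemari\'e-Rieusset \cite{LR1} (for $\Omega=\mathbb{R}^3$) and of \cite{BarkerSer} (for $\Omega=\mathbb{R}^3_+$). This is admissible because when $q<\infty$ one has $L^{3,q}(\Omega)\hookrightarrow L^2_{\mathrm{uloc}}(\Omega)$ (using density of smooth compactly supported divergence-free fields in $L^{3,q}$), so $u_0$ lies in the initial-data class of those theories. Concatenating $v|_{[0,T)}$ with $\tilde v|_{[T,\infty)}$ produces an extended field $V$; a short compactness argument at $t=T$ should verify that $V$ remains a suitable local energy weak solution across time $T$.

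The final contradiction is obtained by zooming in at a putative singular point $(x_0, T)$. Setting $V^{(\lambda_k)}(y,s) := \lambda_k V(x_0 + \lambda_k y, T + \lambda_k^2 s)$ for $\lambda_k\to 0$, scale invariance of the Lorentz norm gives uniform bounds on $V^{(\lambda_k)}$. Passing to a limit $W$ via the Lorentz splitting (an arbitrarily $L^{3,q}$-small part plus a bounded remainder, which crucially requires $q<\infty$), one obtains a nontrivial mild ancient solution to the Navier-Stokes system on $\mathbb{R}^3\times(-\infty,0]$. Backward uniqueness and unique continuation arguments as in \cite{ESS2003} then force $W\equiv 0$, contradicting the nontriviality forced by the blowup, and hence disproving the initial assumption.

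The hard step, I expect, is the concatenation at time $T$ on the half-space: one must show that $V$ satisfies the local energy inequality across $T$ when only a weak $L^{3,q}$ limit is available as an initial slice, and that the associated pressure admits a decomposition (including the harmonic boundary correction required on $\mathbb{R}^3_+$) compatible with the Lorentz bounds needed for the subsequent zoom-in. A secondary obstacle is obtaining strong enough compactness of the rescaled solutions $V^{(\lambda_k)}$ so that $W$ inherits a nontrivial trace at $s=0$; here the finiteness of $q$ is essential, as it provides the approximation/density properties that endpoint $L^{3,\infty}$ lacks.
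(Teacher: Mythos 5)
Your outline departs from the paper's argument in a way that leaves the central quantitative step unproved. The only information the contradiction hypothesis gives you is an $L^{3,q}$ bound at the discrete times $t_n$, not on any time interval. Consequently the assertion that ``scale invariance of the Lorentz norm gives uniform bounds on $V^{(\lambda_k)}$'' is unjustified for a generic sequence $\lambda_k\to 0$: you only control the rescaled fields at the images of the $t_n$, so the scales must be slaved to the sequence, as in the paper where $\lambda_n=\sqrt{(T-t_n)/2}$ places the controlled slice at the fixed rescaled time $s=-2$ (and the limit then lives on the finite slab $]-2,0[$, not on $(-\infty,0]$ as an ancient solution). Even granting that, the engine of the proof is missing: one must convert the single-time $L^{3,q}$ bound into uniform space-time bounds (energy, $L_{10/3}$, and then coercive Stokes estimates) for the rescaled solutions, strong enough to pass to a nontrivial suitable weak limit. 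The paper does this with Lemma \ref{Decomp}: a Calder\'on-type splitting of the rescaled, divergence-free data into an $L_{10/3}$ piece and an $L_2$ piece (the Helmholtz projection keeping both solenoidal), solving the linear Stokes problem for the first and running energy estimates on the perturbed nonlinear system for the second. Your proposed splitting --- an arbitrarily $L^{3,q}$-small part plus a bounded remainder --- is a different decomposition, and a piece that is merely small in $L^{3,q}\hookrightarrow L^{3,\infty}$ does not obviously generate a controllable evolution without a small-data theory at the $L^{3,\infty}$ endpoint, which is exactly the delicate point the subcritical/energy splitting is designed to avoid.

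The continuation of $v$ past $T$ by a Lemari\'e-Rieusset solution with datum $u_0$, and the concatenation at $t=T$, is both a substantial unproved step and, more importantly, beside the point: the singularity at $T$ has to be excluded using the behaviour of $v$ for $t<T$, and nothing in the zoom-in or the backward-uniqueness argument requires the solution to exist after $T$. The paper is explicit that its gain over the $L_3$ results comes precisely from \emph{not} invoking the local energy (Lemari\'e-Rieusset) theory, because that theory demands control of the limit solution near its initial time --- the same control you would need to call your limit $W$ a ``mild'' solution. By contrast, the only place the finiteness of $q$ enters the paper, besides Lemma \ref{Decomp}, is in showing $\|u(\cdot,T)\|_{L^{3,q}(B^+(\lambda_n a))}\to 0$, which gives the vanishing of the limit at the final time needed to start the backward-uniqueness argument; your sketch does not identify where this absolute-continuity property of the non-endpoint Lorentz norm is used.
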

%We also describe a class of critical spaces, which contains $L^{3,q}$ for $q\neq\infty$, such that (\ref{criticalblowuplimit}) holds.\\ 
Before commenting further let us define the Lorentz spaces. 
For a measurable function $f:\Omega\rightarrow\mathbb{R}$ define:
\begin{equation}\label{defdist}
d_{f,\Omega}(\alpha):=|\{x\in \Omega : |f(x)|>\alpha\}|.
\end{equation}
Given a measurable subset $\Omega\subset\mathbb{R}^{n}$,  the Lorentz space $L^{p,q}(\Omega)$, with $p\in ]0,\infty[$, $q\in ]0,\infty]$, is the set of all measurable functions $g$ on $\Omega$ such that the quasinorm $\|g\|_{L^{p,q}(\Omega)}$ is finite. Here:

\begin{equation}\label{Lorentznorm}
\|g\|_{L^{p,q}(\Omega)}:= \Big(p\int\limits_{0}^{\infty}\alpha^{q}d_{g,\Omega}(\alpha)^{\frac{q}{p}}\frac{d\alpha}{\alpha}\Big)^{\frac{1}{q}},
\end{equation}
\begin{equation}\label{Lorentznorminfty}
\|g\|_{L^{p,\infty}(\Omega)}:= \sup_{\alpha>0}\alpha d_{g,\Omega}(\alpha)^{\frac{1}{p}}.
\end{equation}\\
It is well known that for $q\in ]0,\infty[,\,q_{1}\in ]0,\infty]$ and $q_{2}\in ]0,\infty]$ with $q_{1}\leqslant q_{2}$ we have the following embedding
$ L^{p,q_1} \hookrightarrow  L^{p,q_2}$
and the inclusion is known to be strict.
Roughly speaking, the second index of Lorentz spaces gives information regarding nature of logarithmic bumps. For example, %using decreasing rearrangements, it can be verified that 
for any $1>\beta>0, q>3$ we have
\begin{equation}\label{lorentzlogsing}
|x|^{-1}|\log(|x|^{-1})|^{-\beta}\chi_{|x|< 1}(x)\in L^{3,q}(\mathbb{R}^3)\,\,\,\rm{if\, and\, only\, if}\,\,q>\frac{1}{\beta}.
\end{equation}
%It is clear that $|x|^{-1}$ is in $L^{3,\infty}(\mathbb{R}^3)$. 
In this way Theorem \ref{limlorentz} gives a strengthening of the previous result obtained by both authors in \cite{BarkerSer}.  It should be stressed that, at the time of writing, (\ref{criticalblowuplimit}) is open for the critical norm $L^{3,\infty}(\Omega)$ that contains $|x|^{-1}$. Furthermore, uniqueness of  weak Leray-Hopf solutions in the space 
$L_{\infty}(0,T; L^{3,\infty}(\Omega))$ remains open. We mention that  interior regularity results, that have smallness condition on $L_{\infty}(L^{3,\infty})$ norm, have been obtained in  \cite{kimkozono}, \cite{kozono} and \cite{Tsai}, for example. 

In order to describe the heuristics behind the proof of Theorem \ref{limlorentz}, we first mention previous work. In \cite{Ser12}, the method used for the statement in $L_{3}(\mathbb{R}^3)$ is, roughly speaking, as follows. It is based on assuming, for contradiction, an increasing sequence of times tending to the blow-up time such that the $L_{3}$-norm of the velocity is bounded. Then a suitable rescaling and limiting procedure is performed which gives the special type of the so-called local energy ancient solutions to the Navier-Stokes equations that coincide with Lemarie-Rieusset solutions to the Cauchy problem for same equations on some finite time interval. Those solutions have been introduced by Lemarie-Rieusset in \cite{LR1}, see also for some definitions in \cite{KS}. Moreover, the ancient solution is non-trivial. The contradiction is then obtained by proving a Liouville type theorem for those solutions based on backward uniqueness. The key point to mention from this is that in order to produce a Lemarie-Rieusset local energy solution to the Cauchy problem on a finite time interval, one needs certain behaviour of the limiting solution near it's initial time. The way to gain this in \cite{Ser12} is to split the rescaled solutions to two parts, one is the heat semigroup with weakly converging initial data and the other contains the non linearity but has zero initial data. 

However, in \cite{BarkerSer},  a different approach has been used, for both $L_{3}(\mathbb{R}^3)$ and $L_{3}(\mathbb{R}^3_{+})$. The rescaling and splitting of rescaled solutions is the same as in \cite{Ser12}, the  main difference with \cite{Ser12} is that the theory of Lemarie-Rieusset local energy solutions is not used.
Hence, heuristically speaking, the behaviour of the limit solution near it's initial time is not necessary in \cite{BarkerSer}.
Our observation is that this allows us to decompose the rescaled initial data and solutions in a different way to as was done in  \cite{BarkerSer} and \cite{Ser12}. Combining this observation and ideas from \cite{BarkerSer} allows us to strengthen the result of \cite{BarkerSer}, and thus providing Theorem \ref{limlorentz}.

\setcounter{equation}{0}
\section{Proof of Theorem 1.2}
We make use of the following notation:
$$ Q^+_{-A,0}:= \mathbb{R}^3_+\times ]-A,0[,\,\,Q_{\infty}:=\mathbb{R}^3\times ]0,\infty[,\,\,\,Q_{\infty}^+:=\mathbb{R}^3_+\times ]0,\infty[.$$
Furthermore $$B(x_0,R):=\{x: |x-x_0|<R\},\,B^+(x_0,R):=\{x\in B(x_0,R): x_3\geqslant x_{03}\}.$$
Let $T$ be a positive parameter, $\Omega$ a domain in $\mathbb{R}^3$. Then $Q_{T}:=\Omega\times ]0,T[$.
Let $L_{m,n}(Q_T)$ be the space of measurable $\mathbb{R}^l$- valued functions with the following norm
$$\|f\|_{L_{m,n}(Q_{T})}:=(\int\limits_0^T\|f(\cdot,t)\|_{L_{m}(\Omega)}^{n} dt)^{\frac 1n},$$
for $n\in[1,\infty[$, and with the usual modification if $n=\infty$.We  define the following Sobolev spaces with the mixed norm:
$$ W^{1,0}_{m,n}(Q_{T})=\{ v\in L_{m,n}(Q_{T}): \|v\|_{L_{m,n}(Q_{T})}+$$$$+\|\nabla v\|_{L_{m,n}(Q_{T})}<\infty\},$$
$$ W^{2,1}_{m,n}(Q_{T})=\{ v\in L_{m,n}(Q_{T}): \|v\|_{L_{m,n}(Q_{T})}+$$$$+\|\nabla v\|_{L_{m,n}(Q_{T})}+\|\nabla^{2} v\|_{L_{m,n}(Q_{T})}+\|\partial_{t} v\|_{L_{m,n}(Q_{T})}<\infty\}.$$

Now we state and prove  simple (but important) fact about Lorentz spaces concerning a decomposition. This will be formulated as a Lemma. Analogous statement  is Lemma II.I proven by Calderon in \cite{Calderon90}. 
\begin{lemma}\label{Decomp}
Take $1< t<r<s\leqslant\infty$, and suppose that $g\in L^{r,\infty}(\Omega)$. For any $N>0$, we let 
$g_{N_{-}}:= g\chi_{|g|\leqslant N}$ and $g_{N_{+}}:= g-g_{N_{-}}.$
Then %$g=g_{N_{-}}+g_{N_{+}}$, where $g_{N_{-}}\in L_{s}(\Omega)$ with
\begin{equation}\label{bddpartg}
\|g_{N_{-}}\|_{L_{s}(\Omega)}^{s}\leqslant\frac{s}{s-r}N^{s-r}\|g\|_{L^{r,\infty}(\Omega)}^{r}-N^{s}d_{g}(N)
\end{equation}
if $s<\infty$ and $\|g_{N_{-}}\|_{L_{\infty}(\Omega)}\leqslant N$, and %$g_{N_{+}}\in L_{t}(\Omega)$ with
\begin{equation}\label{unbddpartg}
\|g_{N_{+}}\|_{L_{t}(\Omega)}^{t}\leqslant \frac{r}{r-t}N^{t-r}\|g\|_{L^{r,\infty}(\Omega)}^{r}.
\end{equation}
Moreover for $\Omega=\mathbb{R}^{3}$ or $\mathbb{R}^{3}_{+}$, if $g\in L^{r,p}(\Omega)$ with $1\leqslant p\leq\infty$ and $\rm{div}\,\,g=0$ in weak sense (also $g_3(x',0)= 0$ for half space in weak sense), then
$g= g_{1}+g_{2}$ where $g_{1}\in [C_{0,0}^{\infty}(\Omega)]^{L_{s}(\Omega)}$ with
\begin{equation}\label{g1divfree}
\|g_{1}\|_{L_{s}(\Omega)}\leqslant C(s,r,p,\|g\|_{L^{r,p}(\Omega)})
\end{equation}
and $g_{2}\in [C_{0,0}^{\infty}(\Omega)]^{L_{t}(\Omega)}$ with \begin{equation}\label{g2divfree}
\|g_{2}\|_{L_{t}(\Omega)}\leqslant C(r,t,p,\|g\|_{L^{r,p}(\Omega)}).
\end{equation}
%Here $t$ is taken to be strictly greater than $1$.
\end{lemma}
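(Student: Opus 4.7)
The proof splits into the scalar Lorentz-norm estimates (\ref{bddpartg})--(\ref{unbddpartg}) and the subsequent divergence-free decomposition.

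For the scalar estimates I would begin from the layer-cake representation
$$\|h\|_{L_s(\Omega)}^{s}=s\int_{0}^{\infty}\alpha^{s-1}\,d_{h,\Omega}(\alpha)\,d\alpha$$
and observe by direct inspection of the definitions that $d_{g_{N_-},\Omega}(\alpha)=d_{g,\Omega}(\alpha)-d_{g,\Omega}(N)$ on $[0,N)$ and is zero on $[N,\infty)$, while $d_{g_{N_+},\Omega}(\alpha)=d_{g,\Omega}(N)$ on $[0,N)$ and equals $d_{g,\Omega}(\alpha)$ on $[N,\infty)$. Feeding the Chebyshev-type bound $d_{g,\Omega}(\alpha)\leqslant\alpha^{-r}\|g\|_{L^{r,\infty}(\Omega)}^{r}$ read off from (\ref{Lorentznorminfty}) into the two layer-cake integrals and evaluating the elementary integrals $\int_{0}^{N}\alpha^{s-r-1}d\alpha=(s-r)^{-1}N^{s-r}$ (valid since $s>r$) and $\int_{N}^{\infty}\alpha^{t-r-1}d\alpha=(r-t)^{-1}N^{t-r}$ (valid since $t<r$) produces (\ref{bddpartg}) and (\ref{unbddpartg}); the pointwise inequality $\|g_{N_-}\|_{L_\infty(\Omega)}\leqslant N$ is immediate from the definition of truncation.

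For the divergence-free decomposition I would first use the embedding $L^{r,p}(\Omega)\hookrightarrow L^{r,\infty}(\Omega)$ to apply the scalar estimates for any $1\leqslant p\leqslant\infty$. Fix $N=1$ for concreteness; then $g_{N_-}\in L_s(\Omega)$ and $g_{N_+}\in L_t(\Omega)$ with norms bounded in terms of $s,r,t,p$ and $\|g\|_{L^{r,p}(\Omega)}$. Since neither truncation is itself divergence-free, I would apply the Leray-Helmholtz projector $\mathbb{P}_\Omega$ associated with $\Omega$, which is bounded on $L_m(\Omega)$ for every $1<m<\infty$ (for $\Omega=\mathbb{R}^3_+$ this is the classical result of Fujiwara-Morimoto; see Galdi or Sohr). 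Setting $g_1:=\mathbb{P}_\Omega g_{N_-}$ and $g_2:=\mathbb{P}_\Omega g_{N_+}$, the difference $g-g_1-g_2$ is a sum of two Helmholtz remainders, hence a gradient $\nabla\pi$ in $L_s(\Omega)+L_t(\Omega)$; it is also divergence-free, being the difference of the divergence-free fields $g$ and $g_1+g_2$, and has vanishing normal trace on $\partial\mathbb{R}^3_+$ in the half-space case by the hypothesis on $g$. Uniqueness of the Helmholtz decomposition then forces $\nabla\pi=0$, giving $g=g_1+g_2$. The bounds (\ref{g1divfree}) and (\ref{g2divfree}) follow by composing the operator norm of $\mathbb{P}_\Omega$ on $L_s$ and $L_t$ with the scalar estimates of part one. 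Finally, the classical identification of $[C_{0,0}^\infty(\Omega)]^{L_m(\Omega)}$ with the $L_m$ subspace of divergence-free vector fields (of vanishing normal trace on $\partial\mathbb{R}^3_+$), valid for $1\leqslant m<\infty$ on both $\Omega=\mathbb{R}^3$ and $\Omega=\mathbb{R}^3_+$, places $g_1,g_2$ into the required closures.

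The substantive ingredients are the $L_m$-boundedness of the Leray-Helmholtz projector on $\mathbb{R}^3_+$ and the density of $C_{0,0}^\infty$ in the $L_m$ divergence-free class with vanishing normal trace; both are standard but nontrivial, and I would expect the half-space case to be where the main care is needed. The statement allows $s=\infty$, which the projection-based argument does not reach because $\mathbb{P}_\Omega$ is unbounded on $L_\infty$; however, only finite $s$ is used in the forthcoming proof of Theorem \ref{limlorentz}, so this limitation does not affect the application.
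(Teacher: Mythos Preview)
Your proposal is correct and follows essentially the same route as the paper: the paper cites \cite{Mccormick} for the scalar estimates (\ref{bddpartg})--(\ref{unbddpartg}) where you supply the layer-cake computation directly, and for the divergence-free part the paper likewise applies the Helmholtz--Weyl decomposition to each truncation and shows the two gradient remainders sum to a harmonic function in $L_s+L_t$, hence vanish---exactly your Leray-projector argument phrased slightly differently. Your remark that the projection step fails at $s=\infty$ is apt; the paper's proof has the same implicit restriction, and only finite $s$ is used downstream.
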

\begin{proof}
Proof of decomposition (\ref{bddpartg})-(\ref{unbddpartg}) can be found in \cite{Mccormick}. %We give it here for completeness.\\
%One notes that
%\begin{equation}\label{<Mdist}
%d_{g_{N_{-}}}(\alpha)= \chi_{t<N}(\alpha)\Big(d_{g}(\alpha)-d_{g}(N)\Big),
%\end{equation}
%\begin{equation}\label{>Mdist}
%d_{g_{N_{+}}}(\alpha)= \chi_{t\leqslant N}(\alpha)\Big(d_{g}(N)\Big)+\chi_{t> N}(\alpha)\Big(d_{g}(\alpha)\Big).
%\end{equation}
%So $$\|g_{N_{-}}\|_{L_{s}(\Omega)}^{s}= s\int\limits_{0}^{N}d_{g_{N_{-}}}(\alpha)\alpha^{s-1}d\alpha=$$$$=s\int\limits_{0}^{N} d_{g}(\alpha)\alpha^{r}\alpha^{s-r-1}-d_{g}(M)\alpha^{s-1} d\alpha\leqslant$$$$\leqslant \frac{s}{s-r}N^{s-r}\|g\|_{L^{r,\infty}(\Omega)}^{r}-N^{s}d_{g}(N).  $$
 %The $s=\infty$ case is obvious. 
% For the other part, it is seen that
% $$\|g_{N_{+}}\|_{L_{t}(\Omega)}^{t}= t\int\limits_{0}^{\infty}d_{g_{N_{+}}}(\alpha)\alpha^{t-1}d\alpha=$$$$=t\int\limits_{0}^{N} d_{g}(N)\alpha^{t-1} d\alpha+t\int\limits_{N}^{\infty} d_{g}(\alpha)\alpha^{r}\alpha^{t-r-1}d\alpha\leqslant$$$$\leqslant N^{t-r}\|g\|_{L^{r,\infty}(\Omega)}^{r}+\frac{t}{r-t}N^{t-r}\|g\|_{L^{r,\infty}(\Omega)}^{r}=\frac{r}{r-t}N^{t-r}\|g\|_{L^{r,\infty}(\Omega)}^{r}\\.  $$
% Now for the second part.\\

Given $g$, satisfying assumptions of the lemma, we can find $g_{1_-}$ and $g_{1_+}$. We then can use the Helmoltz-Weyl decomposition $g_{1_-}=g_1+\nabla q_1$
where $g_1$ belongs to the required space with the estimate
$\|g_1\|_{L_s(\Omega)}\leq c\|g_{1_-}\|_{L_s(\Omega)}$, $\|\nabla q_1\|_{_s(\Omega)}\leq c\|g_{1_-}\|_{L_s(\Omega)}$, and
$$\int\limits_\Omega\nabla q_1\cdot \nabla \varphi dx = \int\limits_\Omega g_{1_-}\cdot \nabla \varphi dx,\quad \forall\varphi\in C^\infty_0(\mathbb R^3).$$ The same is true for the second counterpart. So, we have 
$$\int\limits_\Omega\nabla (q_1+q_2)\cdot \nabla \varphi dx=0\quad \forall\varphi\in C^\infty_0(\mathbb R^3).$$
Using properties of harmonic functions and the above global integrability of $\nabla q_1$ and $\nabla q_2$, we conclude that $\nabla (q_1+q_2)=0$. From this, from estimates (\ref{bddpartg})-(\ref{unbddpartg}), and embedding $L^{r,p}(\Omega)$ into $L^{r,\infty}(\Omega)$, we derive the required estimates  (\ref{g1divfree}) and (\ref{g2divfree}). %It is known that under the hypothesis of $r$ and $p$ that smooth compactly supported functions are dense on $L^{r,p}(\Omega)$. 
%Approximate $g$ in $L^{r,p}(\Omega)$ by $g^{n}\in C^{\infty}_{0}(\Omega)$. Let $\mathbb{P}_{\Omega}$ denote the Leray projector in $\Omega$.
%By the generalized Marcinkiewicz theorem, we have that the Leray projector is continuous on  $L^{r,p}(\mathbb{R}^{3})$ for $r\in ]1,\infty[$ and $p\in ]0,\infty[$. So using this and the generalized Holder inequality for Lorentz spaces, it is easy to see that a.a
%$$\lim_{n\rightarrow}\mathbb{P}_{\Omega}g^{n}= g.$$ 
%Using the notation in (\ref{bddpartg})-(\ref{unbddpartg}) consider
%$$g_{1}^{n}= \mathbb{P}_{\Omega}g^{n}_{1_{-}},$$
%$$g_{2}^{n}= \mathbb{P}_{\Omega}g^{n}_{1_{+}}.$$
%Then $g_{1}$ and $g_{2}$ are obtained as weak limits (up to subsequence) using Calderon Zygmund.
\end{proof}
Now we are in a position to prove Theorem\ref{limlorentz}. Consider the more difficult case $\Omega=\mathbb R^3_+$.
%\textbf{Proof of Theorem \ref{limlorentz}}  \\We consider case $\Omega =\mathbb{R}^{3}_{+}$, the case of the whole space is similar.
%Proof by adsurdum.
Suppose that the statement of the theorem is false, then there exists an increasing sequence $t_{n}\uparrow T$ such that
\begin{equation}\label{Lorentzbdd}
M :=\sup_{n}\|u(\cdot, t_{n})\|_{L^{3,q}(\mathbb{R}^3_{+})}<\infty.
\end{equation}
We know there exists a singular point 
$x_{0}=(x'_{0},x_{03})$. Without loss of generality, we may assume that $x_0=0$. The case $x_{03}>0$ is easier. 
%such that $u\not\in L_{\infty}(Q^{+}((x_0,T),r))$ for any positive r.
%We consider the  more difficult case $x_{03}=0$.\\

First examine the profile. It can be shown $u(x,t)\rightarrow u(x,T)$ for a.a $x\in\Omega$. From this and Fatou's lemma we infer
\begin{equation}\label{profiledistribution} 
d_{u(\cdot,T),\Omega}(\alpha)\leqslant \liminf_{n\rightarrow \infty} d_{u(\cdot,t_{n}),\Omega}(\alpha)' 
\end{equation}
%Then by the Fatou-lemma we see
\begin{equation}\label{profileLorentz}
\|u(\cdot,T)\|_{L^{3,q}(\mathbb{R}^3_{+})}\leqslant M.
\end{equation}
Our rescaling will be as follows:
$$ u^{(n)} (y,s):= \lambda_n u(x,t),\, p^{(n)} (y,s):= \lambda_{n}^{2} q(x,t),$$ 
where
$$ x= \lambda_{n} y,\,t=T+\lambda_{n}^{2}s,\,\lambda_{n}=\sqrt{\frac{T-t_{n}}{2}}.$$
So  smooth solutions $u^{(n)}$ and $p^{(n)}$ satisfy Navier-Stokes equations in $\mathbb{R}^{3}_{+}$ with
$$u^{(n)}_{0}(y,-2)=\lambda_{n}u(\lambda_{n}y,t_{n}).$$ 
By scale invariance of the norm in the space $L^{3,q}$, %(\ref{Lorentzscale})
 we observe the following:
\begin{equation}\label{scaleinvariance}
\sup_{n}\|u^{(n)}_{0}(\cdot,-2)\|_{L^{3,q}(\mathbb{R}^{3}_{+})}=M<\infty.
\end{equation}
The key observation for Lorentz spaces as as follows.
From  Lemma \ref{Decomp} (along with fact embedding into weak $L_{3}$) we may decompose:
$$u_{0}^{(n)}(\cdot,-2):= u_{0}^{1,(n)}(\cdot,-2)+u_{0}^{2,(n)}(\cdot,-2),$$
where 
\begin{equation}\label{decobspace}
u_{0}^{1,n}(\cdot,-2)\in [C^{\infty}_{0,0}(\mathbb{R}^{3}_{+})]^{L_{\frac{10}{3}}(\mathbb{R}^{3}_{+})},\,u_{0}^{2,n}(\cdot,-2)\in [C^{\infty}_{0,0}(\mathbb{R}^{3}_{+})]^{L_{2}(\mathbb{R}^{3}_{+})}
\end{equation}
\begin{equation}\label{decompest}
\|u_{0}^{1,(n)}(\cdot,-2)\|_{L_{\frac{10}{3}}(\mathbb{R}^{3}_{+})}+\|u_{0}^{2,(n)}(\cdot,-2)\|_{L_{2}(\mathbb{R}^{3}_{+})}\leqslant c(M,q).
\end{equation}
We then decompose $u^{(n)}=v^{1,(n)}+v^{2,(n)}$ %in a different way to \cite{BarkerSer}.
%,$$u^{(n)}=v^{1,(n)}+v^{2,(n)}$$
where $v^{1,(n)}$ and $p^{1,(n)}$ solve the linear problem
$$\partial_tv^{1,n}-\Delta v^{1,(n)}=-\nabla p^{1,(n)},\qquad \mbox{div}\,v^{1,n}=0$$
in $Q^+_{-2,0}$,
$$v^{1,(n)}(x',0,t)=0$$
for $(x',t)\in\mathbb R^2\times [-2,0]$,
$$v^{1,(n)}(\cdot,-2)=u_{0}^{1,(n)}(\cdot,-2)\in L_\frac {10}3(\mathbb R^3_+).$$
Using Solonnikov estimates for the Green function in a half-space one sees  that the following estimates are valid for $v^{1,n}$ ($\frac{10}{3}\leqslant l\leqslant \infty, k=1,2,\ldots$):
\begin{equation}\label{semigroupest}
\|\nabla^{k}v^{1,(n)}(\cdot,t)\|_{L_{l}(\mathbb{R}^3_{+})}\leqslant \frac{c(M)}{(t+2)^{\frac{k}{2}+\frac{3}{2}(\frac{3}{10}-\frac{1}{l})}}.
\end{equation}
Thus $$\|v^{1,(n)}(\cdot,t)\|_{L_{5}(\mathbb{R}^3_{+})}\leqslant \frac{c(M)}{(t+2)^{\frac{3}{20}}},\quad
\|v^{1,(n)}(\cdot,t)\|_{L_{4}(\mathbb{R}^3_{+})}\leqslant \frac{c(M)}{(t+2)^{\frac{3}{40}}}.$$
It is then seen that:
\begin{equation}\label{L4,5}
 \|v^{1,(n)}\|_{L_{5}(Q_{-2,0}^+)}+\|v^{1,(n)}\|_{L_{4}(Q_{-2,0}^+)}+\sup\limits_{t\in ]-2,0[}\|v^{1,(n)}(\cdot,t)\|_{L_{\frac {10}{ 3}}(R^3_+)}\leqslant c(M).
\end{equation}

The second counterpart of $u^{(n)}$ satisfies the  non-linear system
$$\partial_tv^{2,(n)}+\mbox{div}\,u^{(n)}\otimes u^{(n)}-\Delta v^{2,(n)}=-\nabla p^{2,(n)},\qquad \mbox{div}\,v^{2,(n)}=0$$
in $Q^+_{-2,0}$, the boundary conditions 
$$v^{2,(n)}(x',0,t)=0$$
for $(x',t)\in\mathbb R^2\times [-2,0]$, and the initial conditions
$$v^{2,(n)}(\cdot,-2)= u^{2,n}_{0}(\cdot,-2)$$
in $\mathbb{R}^3_+$.

The standard energy approach to the second system gives
$$ \partial_t\|v^{2,(n)}(\cdot,t)\|^2_{L_{2}(\mathbb R^3_+)}+2\|\nabla v^{2,(n)}(\cdot,t)\|^2_{L_{2}(\mathbb R^3_+)}=$$
$$= 2\int\limits_{\mathbb R^3_+}u^{(n)}\otimes u^{(n)}:\nabla v^{2,(n)}dx ds=I_1+I_2+I_3+I_4,$$
where
$$I_1=2\int\limits_{\mathbb R^3_+}v^{1,(n)}\otimes v^{1,(n)}:\nabla v^{2,(n)}dx,\quad I_2=2\int\limits_{\mathbb R^3_+}v^{1,(n)}\otimes v^{2,(n)}:\nabla v^{2,(n)}dx
$$ and $I_3=I_4=0$.
%$$I_3=2\int\limits_{\mathbb R^3_+}v^{2,(n)}\otimes v^{1,(n)}:\nabla v^{2,(n)}dx=0,$$
%$$I_4=2\int\limits_{\mathbb R^3_+}v^{2,(n)}\otimes v^{2,(n)}:\nabla v^{2,(n)}dx=0.$$

Next, let us  consequently evaluate terms on the right hand side of the energy identity. For the first term, we have
$$|I_1|\leq c\|v^{1,(n)}(\cdot,t)\|^2_{L_{4}(\mathbb{R}^3_+)}\|\nabla v^{2,(n)}(\cdot,t)\|_{L_{2}(\mathbb R^3_+)}.$$
%$$\leq c\|u_0\|_{3,\mathbb R^3_+}\Big(\int\limits^t_{-2}\|\nabla u^2(\cdot,s)\|^2_{2,\mathbb R^3_+}ds\Big)^\frac 12.$$
The second term can be treated as follows:
$$|I_2|\leq c\|v^{1,(n)}(\cdot,t)\otimes v^{2,(n)}(\cdot,t)\|_{L_{2}(\mathbb R^3_+)}\|\nabla v^{2,(n)}(\cdot,t)\|_{L_{2}(\mathbb R^3_+)}\leq$$
$$\leq c\|v^{2,(n)}(\cdot,t)\|_{L_{5}(\mathbb R^3_+)}\|v^{2,(n)}(\cdot,t)\|_{L_{\frac {10}{3}}(\mathbb R^3_+)}\|\nabla v^{2,(n)}(\cdot,t)\|_{L_{2}(\mathbb R^3_+)}.
$$
Applying the known multiplicative inequality to the second factor in the right hand side of the latter bound, we find
$$|I_2|\leq c\|v^{1,(n)}(\cdot,t)\|_{L_{5}(\mathbb R^3_+)}\|v^{2,(n)}(\cdot,t)\|^{\frac 25}_{L_{2}(\mathbb R^3_+)}\|\nabla v^{2,(n)}(\cdot,t)\|^\frac 85_{L_{2}(\mathbb R^3_+)}.$$
Letting
$$y(t):=\|v^{2,(n)}(\cdot,t)\|^2_{L_{2}(\mathbb R^3_+)}$$
and using the Young inequality, we find
$$y'(t)+\|\nabla v^{2,(n)}(\cdot,t)\|^2_{L_{2}(\mathbb R^3_+)}\leq
c\|v^{1,(n)}(\cdot,t)\|^5_{L_{5}(\mathbb R^3_+)}y(t)+c\|v^{1,(n)}(\cdot,t)\|^4_{L_{4}(\mathbb R^3_+)}.$$
Next, elementary arguments lead to the inequality
$$\Big(y(t)\exp{\Big(-\int\limits^t_{-2}}\|v^{1,(n)}(\cdot,s)\|^5_{L_{5}(\mathbb R^3_+)}ds\Big)\Big)'\leq$$$$\leq c\exp{\Big(-\int\limits^t_{-2}}\|v^{1,(n)}(\cdot,s)\|^5_{L_{5}(\mathbb R^3_+)}ds\Big)\|v^{2,(n)}(\cdot,t)\|^4_{L_{4}(\mathbb R^3_+)}.$$
So,
$$y(t)\leq c\int\limits^t_{-2}\exp{\Big(\int\limits^t_\tau}\|v^{1,(n)}(\cdot,s)\|^5_{L_{5}(\mathbb R^3_+)}ds\Big)\|v^{1,(n)}(\cdot,\tau)\|^4_{L_{4}(\mathbb R^3_+)}d\tau+$$$$+
\|u_{0}^{2,(n)}(\cdot,-2)\|_{L_{2}(\mathbb{R}^{3}_{+})}^{2}$$
Using, (\ref{decompest}) and (\ref{L4,5}) it is easily seen that
\begin{equation}\label{bddenergy}
\sup_{-2<t<0}y(t)\leqslant C(M),\,\,\quad\|\nabla v^{2,(n)}\|_{L_{2}(Q_{-2,0}^+)}\leqslant C(M).
\end{equation}

From these estimates and from the multiplicative inequality,
one can deduce that
\begin{equation}\label{globalu2}
\|v^{2,(n)}\|_{L_{s}(Q^+_{-2,0})}\leq C(s,M)\end{equation}with any $s\in [2,\frac {10}3]$.
Moreover,
\begin{equation}\label{globalu2}
\|u^{(n)}\|_{L_{\frac{10}{3}}(Q^+_{-2,0})}\leq C(M).
\end{equation}
We now sketch the plan as to how to obtain the contradiction. Full details are found in \cite{BarkerSer}.
Let $$f^{(n)}= u^{(n)}.\nabla u^{(n)}+ v^{1,(n)}+v^{2,(n)}.$$
Using (\ref{semigroupest}) and (\ref{bddenergy}), one can decompose  $$f^{(n)}=f^{(n)}_1+f^{(n)}_2+f^{(n)}_3$$
with estimates 
\begin{equation}\|f^{(n)}_1\|_{L_{\frac 9 8,\frac 3 2}(Q_{-\frac 7 4,0}^+)}+\|f^{(n)}_2\|_{L_{\frac {10} {3}}(Q_{-\frac 7 4,0}^+)}+\|f^{(n)}_3\|_{L_{2}(Q_{-\frac 7 4,0}^+)}\leqslant C(M).
\end{equation}
Multiplying $u^{(n)}$ and $p^{(n)}$ by a suitable cut off function in time and using of Solonnikov coercive estimates for the linear Stokes system  (see \cite{Sol1973}-\cite{Sol2003UMN}), we can decompose $$u^{(n)}= \sum_{i=1}^3 u^{(n)}_{i},\,p^{n}= \sum_{i=1}^3 p^{(n)}_{i}$$ on $Q_{-\frac 3 2,0}^+$ with the following estimates
\begin{equation}\label{coerciveest}
\|u^{(n)}_1\|_{W^{2,1}_{\frac 9 8,\frac 3 2}(Q_{-\frac 3 2,0}^+)}+\|u^{(n)}_2\|_{W^{2,1}_{\frac {10} {3}}(Q_{-\frac 3 2,0}^+)}+\|u^{(n)}_3\|_{W^{2,1}_{2}(Q_{-\frac 3 2,0}^+)}+$$$$+
\|\nabla p^{(n)}_1\|_{L_{\frac 9 8,\frac 3 2}(Q_{-\frac 3 2,0}^+)}+\|\nabla p^{(n)}_2\|_{L_{\frac {10} {3}}(Q_{-\frac 3 2,0}^+)}+\|\nabla p^{(n)}_3\|_{L_{2}(Q_{-\frac 3 2,0}^+)}
\leqslant C(M). 
\end{equation}
One then shows $(u^{n},p^{n})$ tends to a suitable weak solution $(u,p)$ of the Navier-Stokes system on $Q_{-\frac 3 2,0}^+$ with estimate (\ref{globalu2}) and with analogous decomposition and estimates to (\ref{coerciveest}). One uses suitable epsilon regularity criterion developed in \cite{CKN}, \cite{S3} and \cite{Ser09} to show that $u$ is in fact non trivial in $Q^+(a)$ for all $a$ sufficiently small  as a by product of the assumption that the origin is a singular point at time $T$. Moreover, the previously mentioned global estimates for suitable decompositions of $(u,p)$ imply 
\begin{equation}\label{ubddexterior}
 |u(x,t)|+|\nabla u(x,t)|\leq c_1(\delta),
 \end{equation}
for all $(x,t)\in (\mathbb{R}^3_{+\delta}\setminus B^{+}(R_{1}) \times ]-\frac{5}{4},0[$. Here, $\mathbb{R}^3_{+\delta}:=\mathbb{R}^3_{+}\cap \{x_3>\delta\}$.\\
Let us briefly mention minor difference concerning zero endpoint of the limit solution in the context of Lorentz spaces.
It can be shown (as in \cite{BarkerSer}) that
$$
\int\limits_{B^{+}(a)}|u^{(n)}(x,0)| dx\rightarrow \int\limits_{B^{+}(a)}|u(x,0)| dx.
$$
Using generalized Holder inequality for Lorentz spaces (along with scale invariance), it is not so difficult to show
$$\frac{1}{a^{2}}\int\limits_{B^{+}(a)}|u^{(n)}(x,0)| dx\leqslant c\|u^{(n)}(\cdot,0)\|_{L^{3,q}(B^{+}(a))}=c\|u(\cdot,T)\|_{L^{3,q}(B^{+}(\lambda_{n}a))}.$$
Now $u(\cdot,T)\in L_{3,q}(\mathbb{R}^{3}_{+})$ and obviously $$d_{u(\cdot,T),B^{+}(\lambda_{n}a))}(\alpha)\rightarrow 0.$$
So for $0<q<\infty$, $\|u(\cdot,T)\|_{L^{3,q}(B^{+}(\lambda_{n}a))}\rightarrow 0$.\\
With this in mind one, then considers the vorticity equation 
$$\partial_{t}\omega-\Delta\omega= \rm{\,div}\,(\omega\otimes u-u\otimes\omega).$$ 
Then contradiction is then reached by showing that the limit function is trivial as a consequence of showing that $\omega=0$.
 This is shown by properties of the limit functions $(u,p)$ together with backward uniqueness and unique continuation through spatial boundaries for the heat operator with lower order terms as in \cite{ESS2003}. 
$\Box$

\end{document}